\documentclass{amsart}
\usepackage{amssymb}
\usepackage{pb-diagram}

\newtheorem{theorem}{Theorem}[section]

\newtheorem{prop}[theorem]{Proposition}
\newtheorem{lem}[theorem]{Lemma}

\newtheorem{fact}[theorem]{Fact}
\newtheorem{cor}[theorem]{Corollary}

\theoremstyle{definition}
\newtheorem{defn}[theorem]{Definition}

\theoremstyle{remark}



\newcommand{\la}{\langle}
\newcommand{\ra}{\rangle}

\newcommand{\sub}{\subseteq}

\newcommand{\WM}{\widetilde{\cal M}}


\newcommand{\dcl}{\operatorname{dcl}}

\newcommand{\cal}[1]{\ensuremath{\mathcal{#1}}}
\newcommand{\Cal}[1]{\ensuremath{\mathcal{#1}}}

\newcommand{\res}{\ensuremath{\upharpoonright}}


\newcommand{\N}{\mathbb{N}}
\newcommand{\Q}{\mathbb{Q}}
\newcommand{\R}{\mathbb{R}}
\newcommand{\order}{\operatorname{order}}
\newcommand{\graph}{\operatorname{gr}}

\title{The choice property in tame expansions of o-minimal structures}

\begin{document}

\author {Pantelis  E. Eleftheriou}

\address{Department of Mathematics and Statistics, University of Konstanz, Box 216, 78457 Konstanz, Germany}

\email{panteleimon.eleftheriou@uni-konstanz.de}

\thanks{The first author was supported by an Independent Research Grant from the German Research Foundation (DFG) and a Zukunftskolleg Research Fellowship. The second author was partially supported by TUBITAK Career Grant 113F119. The third author was partially supported by NSF grants DMS-1300402 and DMS-1654725.}

\author {Ayhan G\"{u}naydin}

\address{Department of Mathematics, Bo\u{g}azi\c{c}i University, Bebek, Istanbul, Turkey}

\email{ayhan.gunaydin@boun.edu.tr}

\author{Philipp Hieronymi}
\address
{Department of Mathematics\\University of Illinois at Urbana-Champaign\\1409 West Green Street\\Urbana, IL 61801}
\email{phierony@illinois.edu}
\urladdr{http://www.math.uiuc.edu/\textasciitilde phierony}

\begin{abstract}
We establish the choice property, a weak analogue of definable choice, for certain tame expansions of o-minimal structures. Most noteworthily, dense pairs of real closed fields have this property.
\end{abstract}

\date{\today}
 \maketitle

\section{Introduction}



In this paper we establish a weak analogue of definable choice in certain important expansions of o-minimal structures. Throughout  $\Cal M=\la M, <, +, \dots\ra$ is an o-minimal expansion of a densely ordered abelian group whose language is $\Cal L$. Let $\widetilde{\Cal M} = \la\Cal M,P\ra$ be an expansion of $\Cal M$ by a set $P\sub M$ in the language $\Cal L(P)=\Cal L\cup\{P\}$, where we identify the set $P$ with a new unary predicate.
`Definable' means `definable in the language $\Cal L(P)$' and `$\Cal L$-definable' means  `definable in the language $\Cal L$'. When we want to express parameters, we write $A$-definable in the first case and $\Cal L_A$-definable in the second. For a subset $X\subseteq M$, we write $\dcl(X)$ for the definable closure of $X$ in $\Cal M$.
Let $X\sub M^n$ be a definable set. We call $X$ \textbf{small} if there is no $m$ and no $\cal L$-definable function $f:M^{nm}\to M$ such that $f(X^m)$ contains an open interval in $M$.

Pairs $\WM=\la \cal M, P\ra$ with tame geometric behavior on the class of all definable sets have been extensively studied in the literature, and they include dense pairs (\cite{vdd-dense}), expansions of $\cal M$ by a dense independent set (\cite{dms2}), and expansions by a multiplicative group with the Mann property (\cite{dg}). In \cite{egh}, all these examples were put under a common perspective, and a structure theorem was proved for their definable sets, in analogy with the cell decomposition theorem known for o-minimal structures. Namely, after imposing three conditions on the theory of $\WM$ (\cite[Section 2, Assumptions (I)-(III)]{egh}, it was proved  that every definable set is a finite union of \emph{cones}. We need not be  extensive on the results from \cite{egh}, but it is worth pointing out that they imply the failure of definable Skolem functions in that setting.


\begin{fact}\label{noSkolemfunction}(cp. \cite[5.4]{dms1})
Suppose that $\widetilde{\Cal M}=\la\Cal M,P\ra$ satisfies Assumptions (I)--(III) from \cite{egh}.  Let $f: M \to P$ be definable. Then there is a small set $S$ such that $f(M\setminus S)$ is finite. In particular, there is no definable function $h: M \to M$ such that $h(x) \in P\cap (x,\infty)$ for all sufficiently large $x\in M$.
\end{fact}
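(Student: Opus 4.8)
The plan is to deduce the final ``in particular'' clause from the first assertion, and to prove the first assertion by decomposing the graph of $f$ into cones and analysing each cone. For the reduction, suppose there were a definable $h\colon M\to M$ with $h(x)\in P\cap(x,\infty)$ for all $x>x_0$; redefining $h$ to take a fixed value $p_0\in P$ for $x\le x_0$, we obtain a definable $h\colon M\to P$. The first assertion then yields a small $S$ with $h(M\setminus S)$ finite, hence bounded above by some $c\in M$. Since $(c,\infty)$ is not contained in the small set $S$ (otherwise $S$ would contain an interval), there is $x>c$ with $x\notin S$, and then $h(x)>x>c$ contradicts $h(x)\le c$. So it suffices to prove the first assertion.

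Now let $f\colon M\to P$ be definable and consider its graph $\Gamma\sub M^2$, a definable set with $\pi_1(\Gamma)=M$. By the structure theorem of \cite{egh}, $\Gamma=C_1\cup\dots\cup C_r$ is a finite union of cones, which I treat according to dimension. Each $C_i$ of dimension $0$ is small, and since smallness is preserved under $\Cal L$-definable maps and finite unions, the set $S_0:=\bigcup_{\dim C_i=0}\pi_1(C_i)$ is small. For a cone $C$ of dimension $1$, write $C=\bigcup_{s\in S}W_s$ with $S$ small and $\{W_s\}$ an $\Cal L$-definable family of $1$-dimensional sets (refining by an $\Cal L$-definable cell decomposition if necessary). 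As $W_s\sub\Gamma$ and $\Gamma$ is a graph, $W_s$ contains no vertical segment, so $W_s$ is the graph of a continuous $\Cal L_s$-definable function $h_s\colon I_s\to M$ on an open interval $I_s$, and $h_s(x)=f(x)\in P$ there. Thus $h_s(I_s)$ is an $\Cal L_s$-definable subset of $M$ contained in $P$; since $P$ has empty interior, $h_s(I_s)$ contains no interval, hence is finite by o-minimality of $\Cal M$. A continuous finite-valued function on an interval is constant, so $f\equiv c_s$ on $I_s$ for some $c_s\in P$. Consequently $f$ is locally constant on the open definable set $U:=\pi_1(C)=\bigcup_{s\in S}I_s$.

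It remains to pass from local constancy to a finite image, and this is the main obstacle: a priori $U$ could be a union of infinitely many pairwise disjoint intervals carrying distinct values $c_s$, and it is exactly this ``staircase'' behaviour that must be excluded. Here I would invoke that $\WM$ has o-minimal open core --- established in this setting and classical for dense pairs \cite{vdd-dense} --- so that the definable open set $U$ is in fact $\Cal L$-definable, hence a finite union of open intervals $U_1,\dots,U_p$. A locally constant function is constant on each definably connected interval $U_j$, so $f(U)=f(U_1)\cup\dots\cup f(U_p)$ is finite. Summing over the finitely many $1$-dimensional cones and recalling that $M=S_0\cup\bigcup_{\dim C_i=1}\pi_1(C_i)$ with $S_0$ small, we conclude that $f(M\setminus S_0)$ is finite, as required. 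The only genuinely nontrivial input is the o-minimality of the open core; everything else is a direct reading of the cone decomposition, in the spirit of the argument behind \cite[5.4]{dms1}.
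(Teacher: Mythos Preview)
Your argument is essentially correct and in the same spirit as what the paper cites: the paper does not spell out a proof but simply says that the argument for \cite[5.4]{dms1} goes through verbatim once one replaces \cite[Theorem 3(1)]{vdd-dense} by \cite[Corollary 3.26]{egh}. You have effectively reconstructed such an argument from the cone decomposition and the o-minimal open core, both of which are consequences of Assumptions (I)--(III) in \cite{egh}.

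One point deserves a word of justification. When you assert that ``a locally constant function is constant on each definably connected interval $U_j$'', bear in mind that $f$ is only $\Cal L(P)$-definable, while definable connectedness of intervals is an $\Cal L$-phenomenon; in a non-archimedean $M$ an interval is not topologically connected, so local constancy alone does not suffice. The fix is implicit in the tool you are already using: for any $c$, the fibre $f^{-1}(c)\cap U_j$ and its complement in $U_j$ are both open and $\Cal L(P)$-definable, hence $\Cal L$-definable by the o-minimal open core; definable connectedness of $U_j$ in $\Cal M$ then forces the fibre to be empty or all of $U_j$. With this clarification your proof is complete. The derivation of the ``in particular'' clause is fine as written.
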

\begin{proof} Using \cite[Corollary 3.26]{egh} instead \cite[Theorem 3(1)]{vdd-dense}, the same proof as for \cite[5.4]{dms1} works in this case as well.
\end{proof}

 In \cite[Section 5.3]{egh} we introduced the following weak version of definable choice.

\begin{defn}\label{def-cp} Let $h:Z\sub M^{n+k}\to M^l$ be an $\Cal L_A$-definable continuous map and $S\subseteq M^n$ be an $A$-definable small set. We say $\Cal M$ has \textbf{weak definable choice for $(h,S)$} if there are
\begin{itemize}
\item $\Cal L_A$-definable continuous maps $h_1,\dots,h_p$ mapping $M^{m+k}$ into $M^l$,
\item $A$-definable sets $X_1,\dots,X_p\sub M^{m+k}$, and
\item $A$-definable small sets $Y_1,\dots, Y_p\subseteq M^m$,
\end{itemize}
such that for every $a\in \pi(Z)$ and $i=1,\dots,p$,
\begin{enumerate}
\item $X_{ia}\sub Y_i$
\item $h_i(-, a): X_{ia}\to M^l$ is injective, and
\item $h(S \cap Z_a, a)=\bigcup_i h_i(X_{ia}, a)$,
\end{enumerate}
where $\pi(Z)$ denotes the projection of $Z$ onto the last $k$ coordinates.

\noindent
We say that $\widetilde{\Cal M}$ has the \textbf{choice property} if it has weak definable choice for every pair $(h,S)$ as above.
\end{defn}

%

\noindent
Note that in this definition, the sets $Y_i$  could be chosen to be the same small set by taking their union, but we keep it this way as this is how it appears in \cite{egh}.


\medskip
An important consequence of the choice property is that it implies a strong structure theorem (\cite[Theorem 5.12]{egh}), which says that every definable set is a finite \emph{disjoint} union of \emph{strong} cones. This theorem was shown to fail in \cite[Section 5.2]{egh} for general dense pairs, with the counterexample being $\widetilde{\Cal M}=\la \cal M,P\ra$, where $\Cal M=\la\R,<,+,x\mapsto \pi x_{\res [0, 1]}\ra$ and $P=\Q(\pi)$.  In particular, the choice property fails for the above dense pair. 
In \cite[Question 5.13]{egh}  we asked for conditions on $\cal M$ or $\WM$ that guarantee the choice property, and in this paper we establish the following theorem in that regard. 

\begin{theorem}\label{main} Suppose that $\widetilde{\Cal M} = \la\Cal M,P\ra$ satisfies one of the following statements:
\begin{enumerate}
\item $\cal M$ is an ordered $K$-vector space, where $K$ is an ordered field,
\item $P$ is a dense $\dcl$-independent set,
\item $\cal M$ is a real closed field.
\end{enumerate}
Then $\widetilde{\Cal M}$ satisfies the choice property.
\end{theorem}

In Section \ref{sec-easy}, we handle the first two cases.  The bulk of the work is in the last case, which is established in Section \ref{sec-rcf}. In Section \ref{ufss_section}, we study a property equivalent to the choice property and prove some technical results that are used in the last case.

\subsubsection*{Notations and conventions} We will use $i,j,k,l,m,n$ for natural numbers, and $\pi$ always denotes a coordinate projection. Let $X,Y$ be sets. We denote the cardinality of $X$ by $|X|$. If $Z\subseteq X \times Y$ and $x\in X$, then $Z_x$ denotes the set $\{ y\in Y \ : \ (x,y) \in Z\}$.
 For a function $f$, we denote the graph of $f$ by $\graph(f)$. If $f: Z\subseteq X \times Y\to Z'$ and $x\in X$, then $f(x,-)$ denotes the function that maps $y\in Z_x$ to $f(x,y)$. If $a=(a_1,\dots,a_n)$, we sometimes $Xa$ for $X\cup \{a_1,\dots,a_n\}$, and $XY$ for $X\cup Y$.

\section{Vector spaces and dense independent sets}\label{sec-easy}

\subsection{Expansions of vector spaces} Let $K$ be an ordered field and  $\Cal M$ be an ordered $K$-vector space, which is considered as a structure in the language $\Cal L$ of ordered $K$-vector spaces. Recall that the theory of $K$-vector spaces has quantifier elimination in $\Cal L$ (see \cite[Chapter 1]{vdd-book}). It is also well-known that definable functions in $M$ are piecewise affine linear transformations. This is to say that for a definable function $f:X\subseteq M^q \to M$, there is a decomposition of $X$ into semi-linear sets $C_1,\dots, C_t$ such that for each $j=1,\dots, t$ there are $r \in K^q$ and $b \in M$ such that $f(x)=r\cdot x+b$ for all $x\in C_j$, where  $\cdot$ denotes the usual dot-product of tuples with elements in $K$.

\medskip\noindent
Let $P\subseteq M$. We now show that $\la\Cal M,P\ra$ has the choice property. Let $h: Z \subseteq M^{n+k} \to M^l$ be $\Cal L_A$-definable and $S\subseteq M^n$ be an $A$-definable small set. After decomposing $Z$ into finitely many semi-linear sets, we may assume that each $\pi_i\circ h$ is an affine linear function from $M^{n+k}$ to $M$ restricted to the $\Cal L_A$-definable set $Z$. Then there are $r_1,\dots,r_l\in K^n,s_1,\dots,s_l\in K^k$ and $b_1,\dots,b_l\in M$ such that for each $(g,a)\in Z$
\[
h(g,a)=(r_1\cdot g + s_1 \cdot a + b_1,\dots,r_l\cdot g + s_l \cdot a + b_l).
\]

\medskip\noindent
We set
\[
X := \Big\{ (t_1,\dots,t_l,a) \in M^l\times M^k \  : \ \exists g \in S\cap Z_a \bigwedge_{j=1}^l\ r_j \cdot g = t_j\Big\}\text{, and }
\]
\[
Y:=\Big\{(t_1,\dots,t_l)\in M^l:\exists g \in S\bigwedge_{j=1}^l\ r_j \cdot g = t_j\Big\}.
\]
Then $Y$ is small and for each $a\in M^k$ we have $X_a\subseteq Y$.

\medskip\noindent
Let $h_0: M^l \times M^k \to M^l$ map $(t,a)$ to $(t_1 + s_1\cdot a+ b_1,\dots,t_l + s_l\cdot a+ b_l)$. It can be checked easily that $h(S\cap Z_a,a) = h_0(X_a,a)$ and $h_0(-,a)$ is injective for each $a\in \pi(Z)$. This proves the choice property for $\la\Cal M,P\ra$.


\subsection{Expansions by a dense independent set}
Let $\Cal M$ be an o-minimal expansion of an ordered group and let $P$ be a dense $\dcl$-independent subset of $M$. We will show that the pair $\la\Cal M,P\ra$ has the choice property. Before we do so, we recall a bit of notation from \cite{dms2,vdd-book}. We say a set $X \subseteq M^n$ is \textbf{regular} if it is convex in each coordinate, and \textbf{strongly regular} if it is regular and all points in $X$ have pairwise distinct coordinates. A function $f: X \to M$ is called \textbf{regular} if $X$ is regular, $f$ is continuous and in each coordinate, $f$ is either constant or strictly monotone.

\begin{fact}{(\cite[1.5]{dms2} cp. \cite[p.58]{vdd-book})}\label{fact:celldecompo}
Let  $h: Z\subseteq M^m\to M$ be an $\Cal L_A$-definable function. Then there are $\Cal L_A$-definable cells $C_1,\dots,C_r$ such that and
\begin{itemize}
\item[(i)] $Z= \bigcup_{i=1}^r C_i$,
\item[(ii)] if $C_i$ is open, then $C_i$ is strongly regular and the restriction of $h$ to $C_i$ is regular.
\end{itemize}
\end{fact}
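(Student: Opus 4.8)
The plan is to argue by induction on $m$, using only two tools that are available in every o-minimal structure and in particular do not require $\Cal M$ to be a field: the cell decomposition theorem and the monotonicity theorem. Since cell decomposition, the recording of breakpoints, cutting along diagonals and permuting coordinates are all $\Cal L_A$-definable operations, $\Cal L_A$-definability will be automatic and I will not comment on it. The decisive simplification is that the conclusion (ii) is demanded only of \emph{open} (that is, $m$-dimensional) cells, so every lower-dimensional locus created during the refinement may simply be thrown into the collection of lower cells, where nothing is required of it. For the base case $m=1$ the statement is exactly the monotonicity theorem together with o-minimality of subsets of $M$. For the inductive step I first strengthen the statement to a family version (a single decomposition compatible with any finite family of $\Cal L_A$-definable sets and functions), start from an ordinary cell decomposition on whose cells all functions are continuous, and then refine the open cells in two stages.

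The first stage makes the open cells strongly regular. Cutting along all diagonals $\{x_i = x_j\}$ and passing to a compatible decomposition forces every open cell to have pairwise distinct coordinates, since the diagonals are lower-dimensional. For convexity in each coordinate I exploit the induction on the base: writing an open cell as a band $C = \{(x',x_m) : x'\in D,\ f(x') < x_m < g(x')\}$ over an open cell $D \subseteq M^{m-1}$, I apply the inductive hypothesis to $D$ and to $f,g$, arranging that $D$ is strongly regular and $f,g$ are regular. The point is then an elementary lemma: if $D$ is convex in each coordinate and $f,g$ are monotone-or-constant in each coordinate, then $C$ is convex in each coordinate. Indeed, for $i<m$ and a point whose $i$-th coordinate $c$ lies between those of two points of $C$, convexity of $D$ places its base in $D$, and monotonicity of $f$ and $g$ in the $i$-th coordinate forces $f$ and $g$ there to stay on the correct side of $x_m$; convexity in coordinate $m$ is trivial. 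A final splitting of each band by the regular functions $x'\mapsto x_i$ separates $x_m$ from the other coordinates and, by the same lemma, preserves convexity.

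The second stage makes each function regular. For the distinguished last coordinate, the monotonicity theorem applied fibrewise, made uniform by refining $D$ along the finitely many definable breakpoint functions and splitting the band accordingly, renders every function monotone-or-constant in $x_m$. I then treat the remaining coordinates one at a time, using two facts: strong regularity of a set and regularity of a function are invariant under permuting coordinates; and, crucially, monotonicity-or-constancy in a coordinate is \emph{inherited} by any subcell that remains convex in that coordinate. So, to handle coordinate $i$, I permute it into the last slot, rerun the $x_m$-argument to gain monotonicity in $x_i$, and permute back; since every subsequent refinement is convexity-preserving, the behaviour already secured in the other coordinates survives, and in the end every function is regular in all coordinates at once.

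The hard part is precisely this simultaneity, and it is where I expect the real bookkeeping to lie. The monotonicity theorem only yields piecewise monotonicity in one distinguished variable, whereas for $i<m$ the locus across which the monotonicity type in $x_i$ changes is naturally a graph ``solved for $x_i$,'' which does not match the band presentation that singles out $x_m$; moreover a strongly regular open set need not itself be a cell in the fixed coordinate order. Reconciling these points---splitting to achieve monotonicity in each coordinate while guaranteeing that the resulting pieces are genuine cells in the given order, remain strongly regular, and do not reintroduce breakpoints in the coordinates already treated---is the technical core of the statement. It is handled by combining the permutation-invariance of the target conditions with a careful simultaneous induction, and it is for this reason that the result is recorded here as a known fact rather than reproved in detail.
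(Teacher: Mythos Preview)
The paper does not supply a proof of this statement at all: it is recorded as a \emph{Fact} with a bare citation to \cite[1.5]{dms2} and \cite[p.~58]{vdd-book}, and no argument is given. So there is nothing in the paper to compare your proposal against beyond the references themselves.

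That said, your sketch is in line with the approach in those references: induction on the ambient dimension, with the monotonicity theorem supplying the one-variable base case and the coordinatewise behaviour, cutting along diagonals to separate coordinates, and an inductive refinement of the base cell together with the boundary functions to propagate convexity and regularity upward. You also correctly isolate the genuine difficulty, namely obtaining monotonicity in \emph{all} coordinates simultaneously while keeping the pieces honest cells in the fixed coordinate order; in \cite{dms2} this is what the careful statement and proof of their Lemma~1.5 are devoted to. Your permutation-and-inheritance manoeuvre is the right heuristic, but as you yourself note it is not a proof: after permuting coordinates and refining, the resulting pieces are cells only in the permuted order, and one must argue separately that the decomposition can be pushed back. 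Since you explicitly flag this as the technical core and defer to the literature for it, your write-up is an accurate summary rather than a self-contained proof, which matches the paper's own treatment.
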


\begin{theorem}
$\WM$ has the choice property. Moreover, the sets $Y_i$ from Definition \ref{def-cp} are all equal to $P^m$.
\end{theorem}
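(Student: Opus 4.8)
The plan is to first coordinatize the small set $S$ by the predicate $P$, and then to use the o-minimal cell decomposition from Fact \ref{fact:celldecompo} together with the $\dcl$-independence of $P$ to split the relevant maps into injective pieces whose domains lie in a power of $P$. This exactly explains why $Y_i$ can be taken to be $P^m$: the small sets in question live inside $\mathcal{L}_A$-definable images of $P^m$.

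First I would invoke the structure theory of expansions by dense independent sets (cf. \cite{dms2}) to write $S$ uniformly through $P$: there are an $m$, finitely many $\mathcal{L}_A$-definable continuous maps $f_1,\dots,f_r\colon M^m\to M^n$ and $\mathcal{L}_A$-definable sets $D_1,\dots,D_r\subseteq M^m$ with $S=\bigcup_j f_j(P^m\cap D_j)$. Intersecting with the fibers $Z_a$, which are $\mathcal{L}_{Aa}$-definable, I obtain $\mathcal{L}_A$-definable sets $E_j\subseteq M^{m+k}$ with $S\cap Z_a=\bigcup_j f_j(P^m\cap E_{j,a})$ for every $a$. Composing, the maps $g_j:=h\circ(f_j,\mathrm{id})\colon M^{m+k}\to M^l$ are $\mathcal{L}_A$-definable and continuous, and $h(S\cap Z_a,a)=\bigcup_j g_j(P^m\cap E_{j,a},a)$. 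Thus it suffices, for each $j$, to cover $g_j(P^m\cap E_{j,a},a)$ by finitely many injective images of $\mathcal{L}_A$-definable pieces of $P^m$, which reduces the theorem to an injective-parametrization statement for an arbitrary $\mathcal{L}_A$-definable continuous $g\colon M^{m+k}\to M^l$ restricted to $P^m$.

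To prove the latter I would apply Fact \ref{fact:celldecompo} (in a version uniform in $a$, and to each coordinate of $g$) to partition the domain into $\mathcal{L}_A$-definable families of cells on whose open members $g(-,a)$ is regular and which are strongly regular, hence have pairwise distinct coordinates. On such a cell I would discard the coordinates in which every component of $g$ is constant, so that $g$ factors through the projection onto the remaining coordinates $I$, say $|I|=m'$, and then stratify by the finitely many order patterns of the coordinates, keeping for instance the strictly increasing tuples. The decisive point is then about $P$: if $\bar p,\bar q\in P^{m'}$ are two such tuples with $g(\bar p,a)=g(\bar q,a)$, then since $g$ is strictly monotone in each retained coordinate, each $q_s$ is $\dcl$-definable in $\mathcal{M}$ from the other coordinates of $\bar q$, from $\bar p$, and from $a$; as $q_s\in P$ and the coordinates are pairwise distinct, $\dcl$-independence of $P$ forces $q_s\in\{p_1,\dots,p_{m'}\}$, so $\bar q$ is a reshuffling of $\bar p$, and the ordering gives $\bar q=\bar p$. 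Hence $g(-,a)$ is injective on the corresponding piece of $P^{m'}$. Taking $g=g_j$ and these pieces as the $X_i$, padding $P^{m'}$ into $P^m$ by repeating a coordinate, and using $g_j$ as the $h_i$, yields the required data with every $Y_i=P^m$.

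The main obstacle I anticipate is running this independence argument uniformly in the parameter $a$. The set $P\cap\dcl(aA)$ of elements of $P$ captured by $a$ may be nonempty, which a priori enlarges the pool into which each $q_s$ can fall and so could destroy injectivity for special values of $a$. I expect this to be the delicate part, requiring either passage to a base over which $P$ stays independent, or a choice of the pieces $X_{ia}$ that excludes coordinates lying in $\dcl(aA)$. The remaining technical points are the o-minimal dimension bookkeeping needed to see that only finitely many coordinate patterns occur and that the constant-coordinate reduction is compatible with keeping the domains inside powers of $P$.
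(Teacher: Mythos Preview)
Your overall strategy---reduce to working with $P^m$, decompose via Fact~\ref{fact:celldecompo} into strongly regular cells, then exploit $\dcl$-independence of $P$---is essentially the paper's. The paper also reduces first to $S\subseteq P^n$ (citing \cite[Lemma~3.11]{egh} rather than parametrizing $S$ by images of $P^m$ as you do), reduces to $l=1$, and runs an induction on $n+k$ to peel off non-open cells via a coordinate projection; your ``discard the constant coordinates'' move plays the same role but is less explicit about the induction.

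The substantive difference is in the key step on an open strongly regular cell. You aim for \emph{injectivity} of $g(-,a)$ on each order-stratified piece of $P^{m'}$ and correctly flag the obstacle: your argument that $q_s\in\dcl(\text{other }q_t,\bar p,a,A)$ forces $q_s\in\{p_1,\dots,p_{m'}\}$ breaks when $q_s\in P\cap\dcl(aA)$. The paper sidesteps this entirely. It does not try to prove injectivity directly; instead it invokes \cite[1.8]{dms2} to get that each fiber $\{y\in P^n\cap Z_a:h(y,a)=x\}$ is \emph{finite} for every $a$ and $x$, and then defines $X$ by selecting the lexicographic minimum of each (finite, nonempty) fiber. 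This selection is $\widetilde{\Cal M}$-definable and automatically injective, and it requires no genericity hypothesis on $a$.

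In fact your own analysis already contains the seed of this resolution: since $P\cap\dcl(aA)$ is finite (by exchange and independence, its size is bounded by $k+|A|$), the ``enlarged pool'' you worry about still forces each fiber to be finite, uniformly in $a$. So rather than seeking a base change or excluding bad coordinates, the cleanest fix is to drop the goal of piecewise injectivity, accept finiteness, and take the lexicographic minimum---exactly what the paper does.
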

\begin{proof}
By \cite[Lemma 3.11]{egh} we may assume that $S\subseteq P^n$. It is also easy to see that it is enough to consider the case when $l=1$. (This fact actually follows from Lemma \ref{lem:l=1} and the proof of Proposition \ref{inj_UFSS}.)

We now prove the Choice Property by induction on $n+k$. When $n+k=0$, the Choice Property holds trivially. So now suppose that $n+k>0$. By Fact \ref{fact:celldecompo} we can assume that $Z$ itself is a  regular cell, $h$ is regular on $Z$, and that if $Z$ is open, then $Z$ is strongly regular. We will first show that we can reduce to the case that $Z$ is open. \\

Suppose that $Z$ is not open. Since $Z$ is a cell, there is a coordinate projection $\sigma : M^{n+k} \to M^{n+k-1}$ that is bijective on $Z$.
Suppose that $\sigma$ misses one of the last $k$ coordinates. By induction the Choice Property holds for $h' : Z' \to M$ and $S$, where $Z' = \sigma(Z)$ and $h':=h\circ \sigma^{-1}$. From this, the weak definable choice for $h$ and $S$ can be deduced easily. Suppose now that $\sigma$ misses one of the first $n$ coordinates. Let $\tau : M^{n} \to M^{n-1}$ be the coordinate projection missing the same coordinate as $\sigma$. Then $\tau(S)\sub M^{n-1}$. By induction the weak definable choice holds for $h' : Z' \to M$ and $\tau(S)$, where $h':=h\circ \sigma^{-1}$ and
\[
Z' := \{ (z,a) \in M^{n-1+k} \ : \ (z,a) \in \sigma(Z), z\in \tau(S)\}.
\]
The weak definable choice for $h$ and $S$ follows easily.\\

We have reduced to the case that $Z$ is open. Thus $Z$ is a strongly regular cell. Using a similar argument as in the case when $Z$ is not open, we can reduce to the case that $h(-,a)$ is strongly regular on $Z_a$ for every $a\in \pi(Z)$. By \cite[1.8]{dms2} and the fact that $S\sub P^n$, we have that for every $a\in \pi(Z)$ and $x\in M$ the set
$\{ y \in S\cap Z_a \ : \ h(y,a) = x\}$ is finite. We define $X\subseteq M^{n+k}$ to be the set of tuples $(g,a) \in Z$ such that $g\in S$ and $g$ is the lexicographic minimum of
\[
\{ y \in S\cap Z_a \ : \ h(y,a) = h(g,a) \}.
\]
The lexicographic minimum always exists, because the set is finite and nonempty. It follows immediately that $X_a \subseteq P^n$, $h(-,a)$ is injective on $X_a$ and $h(S\cap Z_a,a) = h(X_a,a)$.  
\end{proof}

\section{The choice property and uniform families of small sets}\label{ufss_section}
In this section we restate the choice property in terms of definable families of small sets.
The new statement appears to be more natural and it is better suited for the bookkeeping necessary to handle the third case of Theorem \ref{main} in the next section. We also establish several technical facts that will be useful in the sequel.


\begin{defn}  Let $Z\subseteq M^{m+k+l}$ be $\Cal L$-definable, $S \subseteq M^m$  definable and small,  and $X\subseteq M^{m+k}$ definable.  The triple $(Z,S,X)$ is called a \textbf{uniform family of small sets (UFSS)} if for all $a \in M^k$, we have
\begin{enumerate}
\item $X_{a} \subseteq S$, and
\item $Z_{b,a}$ is finite for each $(b,a)\in \pi(Z)$.
\end{enumerate}

\noindent We say that such a family is \textbf{injective} if in addition the following condition holds for each $a\in M^k$:
\begin{itemize}
\item[(3)] $Z_{b,a} \cap Z_{c,a} = \emptyset$ for distinct $b,c \in X_{a}$.
\end{itemize}
For $A\subseteq M$, we say that $(Z,S,X)$ is \textbf{$A$-definable} if $Z$ is $\Cal L_A$-definable, $S$ is $A$-definable and $X$ is $A$-definable.
\end{defn}


\noindent
Observe that the fact $X_a \subseteq S$ guarantees that $\bigcup_{a \in M^k} X_a$ is small. The reason for calling a UFSS as such is that the family $\big(\bigcup_{b \in X_{a}} Z_{b,a}\big)_{a\in M^k}$ becomes $A$-definable and each member is small in a uniform way.

\medskip\noindent
When we say $(Z,S,X)$ is a UFSS and $Z\subseteq M^{m+k+l}$, this will not only mean that $Z\subseteq M^{m+k+l}$, but also that $S\subseteq M^m$ and $X\subseteq M^{m+k}$.

\medskip\noindent
We fix some notation. Let $(Z,S,X)$ be a UFSS with $Z\subseteq M^{m+k+l}$. We say that $(Z,S,X)$ is a \textbf{union of UFSSs} $(Z_1,S_1,X_1),\dots,(Z_p,S_p,X_p)$ if
\[
\bigcup_{b \in X_a} Z_{b,a} = \bigcup_{b \in X_{1,a}} Z_{1,b,a} \cup \dots \cup \bigcup_{b \in X_{p,a}} Z_{p,b,a}
\]
for all $a \in M^k$. Note that the ambient spaces of the sets $Z_i$ might be different than $M^{m+k+l}$, the ambient space of $Z$; likewise for the sets $S_i$ and  $X_i$.\newline

\begin{lem}\label{lem:choiceprop} Let $(Z,S,X)$ be an injective $A$-definable UFSS. Then there is $p \in \N$ and for each $i=1,\dots, p$
there is an $\Cal L_A$-definable continuous map $h_i : Z_i \subseteq M^{m+k}\to M^l$, such that for every $a\in M^k$,
\begin{enumerate}
\item $h_i(-, a): X_{a}\to M^l$ is injective, and
\item $\bigcup_{b \in X_a} Z_{a,b}=\bigcup_{i=1}^{p} h_i(Z_{i,a}\cap X_{a}, a)$,
\end{enumerate}
In particular, $(\graph(h_i),S,X)$ is an injective $A$-definable UFSS for each $i=1,\dots, p$ and $(Z,S,X)$ is a finite union of these UFSSs.
\end{lem}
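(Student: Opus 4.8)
The plan is to manufacture the maps $h_i$ by an o-minimal cell decomposition of $Z$ and then to read off the two required properties from the UFSS conditions. The point that makes this work is that, although $(Z,S,X)$ is a datum attached to $\WM$, the set $Z$ is by hypothesis $\Cal L_A$-definable; hence the full cell-decomposition machinery of the o-minimal structure $\Cal M$ applies to it, and the functions it produces are genuinely $\Cal L_A$-definable and continuous. The sets $S$ and $X$ (which may involve $P$) play no role in this geometric step and enter only through the combinatorial bookkeeping at the end.

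First I would apply the cell decomposition theorem to the $\Cal L_A$-definable set $Z \subseteq M^{m+k+l}$, with respect to the coordinate projection $\pi \colon M^{m+k+l} \to M^{m+k}$ onto the first $m+k$ coordinates. Condition (2) in the definition of a UFSS says that every fiber $Z_{b,a}$ is finite; consequently no cell of the decomposition contained in $Z$ can be a band in any of the last $l$ coordinates (such a cell would have infinite fibers over $M^{m+k}$), so each cell contained in $Z$ is the graph of a continuous $\Cal L_A$-definable map on a cell of $M^{m+k}$. Collecting these finitely many cells yields continuous $\Cal L_A$-definable maps $h_i \colon Z_i \subseteq M^{m+k} \to M^l$ for $i=1,\dots,p$ with $Z = \bigcup_{i=1}^p \graph(h_i)$.

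Property (2) is then just an unwinding of this last equality: for each $a$ and each $b \in X_a$ one has $Z_{b,a} = \{\, h_i(b,a) : 1\le i\le p,\ (b,a)\in Z_i \,\}$, and taking the union over $b \in X_a$ regroups into $\bigcup_{i=1}^p h_i(Z_{i,a}\cap X_a, a)$. For the injectivity property (1) I would invoke, for the first and only time, the hypothesis that the UFSS is injective. Fixing $a$ and $i$, suppose $h_i(b,a)=h_i(c,a)$ for distinct $b,c\in X_a\cap Z_{i,a}$; since $\graph(h_i)\subseteq Z$ the common value lies in both $Z_{b,a}$ and $Z_{c,a}$, contradicting condition (3). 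Hence $h_i(-,a)$ is injective on $X_a$, understood as on its intersection with the domain $Z_{i,a}$. The closing ``in particular'' is then immediate: each fiber of $\graph(h_i)$ is a singleton, so conditions (1) and (2) of a UFSS are inherited or trivial and condition (3) is exactly the injectivity just proved, while the union of the $(\graph(h_i),S,X)$ recovers $(Z,S,X)$ by property (2).

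I do not expect a serious obstacle: the lemma is largely a translation of the choice-property formalism of Definition \ref{def-cp} into the UFSS language, and its only substantive input is the standard o-minimal fact that a definable set with finite fibers is a finite union of graphs of continuous definable functions. The single point deserving care is the use of the injectivity hypothesis, which is precisely what upgrades the a priori merely continuous functions from cell decomposition to per-parameter injective ones; isolating that step makes clear that condition (3) is used exactly once.
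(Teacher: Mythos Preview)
Your proposal is correct and follows essentially the same approach as the paper: both arguments exploit the finiteness of the fibers $Z_{b,a}$ together with o-minimal cell decomposition to write $Z$ as a finite union of graphs of continuous $\Cal L_A$-definable functions, and then deduce per-parameter injectivity from condition~(3) of the UFSS definition. The only organizational difference is that the paper first extracts (possibly discontinuous) selection functions $f_1,\dots,f_q$ on $\pi(Z)$ by ordering the elements of each fiber and afterwards applies cell decomposition to obtain the continuous $h_i$, whereas you apply cell decomposition to $Z$ directly; this is a matter of bookkeeping rather than substance.
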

\begin{proof}
Since $(Z,S,X)$ is a UFSS, $Z_{b,a}$ is finite for each $(b,a) \in \pi(Z)$. Since $Z$ is $\Cal L_A$-definable and $\Cal M$ is o-minimal, there is $q\in \N$ such that $|Z_{b,a}|\leq q$ for all $(b,a) \in \pi(Z)$; given such $(b,a)$ order elements of $Z_{b,a}$ as $y_{b,a,1}\leq y_{b,a,2}\leq \dots\leq y_{b,a,q}$ (if $|Z_{b,a}|=s$, then repeat the smallest element $q-s+1$ times).
Thus there are $\Cal L_A$-definable functions $f_1,f_2,\dots,f_q : \pi(Z) \subseteq M^{n+k} \to M^l$ such that $f_j(b,a)=y_{b,a,j}$ for each $(b,a)\in\pi(Z)$. It is clear from the construction that
\[
\bigcup_{b \in X_a} Z_{b,a}=\bigcup_{j=1}^q f_j(X_{a}, a).
\]
Since $(Z,S,X)$ is injective, it follows immediately that for every $a\in M^k$, each $f_j(-,a)$ is injective. Using cell-decomposition in o-minimal structures, we obtain $p \in \N$ and for each $i=1,\dots,p$  an $\Cal L_A$-definable continuous map $h_i :Z_i \subseteq M^{m+k}\to M^l$ such that $h_i(-,a)$ is injectivefor every $a\in M^k$, and
\[
\bigcup_{i=1}^p h_i(Z_i\cap X_{a}, a)=\bigcup_{i=1}^q f_i(X_{a}, a)=\bigcup_{b \in X_a} Z_{b,a}.
\]
\end{proof}

\medskip\noindent
Next result relates UFSSs with the choice property.

\begin{prop}\label{inj_UFSS} The following are equivalent:
\begin{itemize}
\item[(i)] Every $A$-definable UFSS is a finite union of injective $A$-definable UFSSs,
\item[(ii)] $\widetilde{\Cal M}$ has the choice property.
\end{itemize}
\end{prop}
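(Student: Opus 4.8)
The plan is to pass back and forth between a pair $(h,S)$ as in Definition~\ref{def-cp} and a UFSS by taking graphs, using Lemma~\ref{lem:choiceprop} as the bridge in one direction and fibre-enumeration in the other. For the implication (i)$\Rightarrow$(ii), I would start from an arbitrary $\Cal L_A$-definable continuous $h\colon Z\sub M^{n+k}\to M^l$ and an $A$-definable small $S\sub M^n$, and encode the target $h(S\cap Z_a,a)$ as a UFSS. Setting $m:=n$, $Z':=\graph(h)\sub M^{n+k+l}$, and $X:=\{(b,a): b\in S,\ (b,a)\in Z\}$, the triple $(Z',S,X)$ is an $A$-definable UFSS: its fibres $Z'_{b,a}=\{h(b,a)\}$ are singletons, $X_a=S\cap Z_a\sub S$, and $\bigcup_{b\in X_a}Z'_{b,a}=h(S\cap Z_a,a)$ for every $a$. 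Applying (i) writes $(Z',S,X)$ as a finite union of injective $A$-definable UFSSs $(Z_i,S_i,X_i)$, and Lemma~\ref{lem:choiceprop} converts each of these into finitely many $\Cal L_A$-definable continuous maps $h_{i,j}$ that are injective on the relevant fibres. Taking the $h_{i,j}$ as the maps of Definition~\ref{def-cp}, the sets $X_{i,j}:=Z_{i,j}\cap X_i$ as its $X$'s, and $S_i$ as its small sets $Y$'s, conditions (1)--(3) fall out directly from the conclusion of Lemma~\ref{lem:choiceprop}. The only cosmetic points are that the $h_{i,j}$ may be partial (exactly as in Lemma~\ref{lem:choiceprop}, matching the definition up to restriction of domains) and that the various domain dimensions can be padded to a common $m$; both are routine.

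For the converse (ii)$\Rightarrow$(i) I would begin with an $A$-definable UFSS $(Z,S,X)$, $Z\sub M^{m+k+l}$, and first linearise its finite fibres exactly as in the proof of Lemma~\ref{lem:choiceprop}: choose $\Cal L_A$-definable functions $f_1,\dots,f_q\colon\pi(Z)\to M^l$ enumerating $Z_{b,a}$, so that $\bigcup_{b\in X_a}Z_{b,a}=\bigcup_{j}f_j(X_a,a)$, and then refine by o-minimal cell decomposition so that each $f_j$ is continuous on each piece. For each resulting continuous $\Cal L_A$-definable $g\colon C\sub M^{m+k}\to M^l$ I would apply the choice property to the pair $(g,S)$, obtaining injective data $h_1,\dots,h_p$, $X_1,\dots,X_p$, $Y_1,\dots,Y_p$. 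Each $h_i$ then yields an injective $A$-definable UFSS of the form $(\graph(h_i),Y_i,\cdot)$, since its fibres are singletons and injectivity of $h_i(-,a)$ on $X_{ia}$ is precisely the disjointness condition (3).

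The one genuine mismatch, and the step I expect to require care, is that the choice property returns the image $g(S\cap C_a,a)$ of the \emph{entire} small fibre, whereas a UFSS unions only over the selected indices $X_a\sub S$. To repair this I would replace $X_i$ by the $A$-definable set $X_i':=\{(b,a)\in X_i : h_i(b,a)\in\bigcup_{b'\in X_a}Z_{b',a}\}$, whose fibre $(X_i')_a\sub X_{ia}\sub Y_i$, so that injectivity is preserved; intersecting the exhaustive union $\bigcup_i h_i(X_{ia},a)=g(S\cap C_a,a)$ with the subset $\bigcup_{b'\in X_a}Z_{b',a}$ then gives exactly $\bigcup_i h_i((X_i')_a,a)=g(X_a\cap C_a,a)$. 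Summing over all the continuous pieces $g$ recovers $\bigcup_{b\in X_a}Z_{b,a}$, exhibiting $(Z,S,X)$ as a finite union of the injective UFSSs $(\graph(h_i),Y_i,X_i')$, as required.
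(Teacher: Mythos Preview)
Your proposal is correct and follows essentially the same route as the paper. For (i)$\Rightarrow$(ii) both you and the paper encode $(h,S)$ as the UFSS $(\graph(h),S,Z\cap(S\times M^k))$ and then invoke Lemma~\ref{lem:choiceprop}; for (ii)$\Rightarrow$(i) the paper's reduction ``by cell decomposition we may assume $Z=\graph(h)$'' is exactly your fibre-enumeration step, and your restricted set $X_i'=\{(b,a)\in X_i: h_i(b,a)\in\bigcup_{b'\in X_a}Z_{b',a}\}$ is the same as the paper's $X_i'=\{(x,a)\in X_i:\exists y\,((y,a)\in X\wedge h(y,a)=h_i(x,a))\}$. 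One small imprecision: your displayed equality $\bigcup_i h_i((X_i')_a,a)=g(X_a\cap C_a,a)$ need not hold for an individual piece $g$ (the left side can be strictly larger, picking up points of $\bigcup_{b'\in X_a}Z_{b',a}$ that lie in $g(S\cap C_a,a)$ via some $b\notin X_a$), but this is harmless since after summing over all pieces $g$ you still recover exactly $\bigcup_{b\in X_a}Z_{b,a}$.
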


\begin{proof}
(i)$\Rightarrow$(ii): Let $h:Z\sub M^{n+k}\to M^l$ be an $\Cal L_A$-definable continuous map and $S\subseteq M^n$ be an $A$-definable small set. Set
\[
W := \{ (y,x,h(y,x)) \ : \ (y,x) \in Z\} ,  \quad X := Z\cap(S\times M^k).
\]
Then it is immediate to check that $(W,S,X)$ is an $A$-definable UFSS. By our assumption, there are injective $A$-definable UFSSs  $(Z_1,S_1,X_1),\dots,(Z_p,S_p,X_p)$ such that $(W,S,X)$ is union of these UFSSs. Thus
\[
h(S\cap X_a,a) = \bigcup_{b \in X_a} W_{b,a} = \bigcup_{b \in X_{1,a}} Z_{1,b,a} \cup \dots \cup \bigcup_{b \in X_{p,a}} Z_{p,b,a}
\]
We can easily modify $S_1,\dots,S_p$ such that there is $m\in \N$ with $S_i\subseteq M^m$ for all $i=1,\dots,p$. By Lemma \ref{lem:choiceprop} each of the $\bigcup_{b \in X_{i,a}} Z_{i,b,a}$ is of the desired form.\newline
(ii)$\Rightarrow$(i): Let $(Z,S,X)$ be an $A$-definable UFSS with $Z\subseteq M^{m+k+l}$. By cell decomposition and since $Z_{b,a}$ is finite for every $(b,a)\in M^{m+k}$, we may assume that there is an $\Cal L_A$-definable continuous function $h : \pi(Z) \to M^l$ such that $Z=\graph(h)$. By the choice property we get an $\Cal L_A$-definable continuous maps $h_1,\dots,h_p$ mapping $M^{m+k}$ into $M^l$, $A$-definable sets $X_1,\dots,X_p\sub M^{m+k}$, and $A$-definable small sets $Y_1,\dots, Y_p\subseteq M^m$, such that for every $a\in \pi(Z)$ and $i=1,\dots,p$,
\begin{enumerate}
\item $X_{ia}\sub Y_i$
\item $h_i(-, a): X_{ia}\to M^l$ is injective, and
\item $h(S \cap Z_a, a)=\bigcup_i h_i(X_{ia}, a)$,
\end{enumerate}
Now for each $i=1,\dots,p$ define
\[
X'_i := \{ (x,a) \in X_i \ : \ \exists y\in M^m \ (y,a) \in X \wedge h(y,a)=h_i(x,a)\}.
\]
It is straightforward to see that by (1) and (2) the triples $(\graph(h_1),Y_1,X_1'),\dots,$ $(\graph(h_p),Y_p,X_p')$ are injective $A$-definable UFSSs. By (3) and the definition of $X_i'$, we have that $(\graph(h),S,X)$ is a union of these UFSSs.
\end{proof}

\noindent We now collect a few easy lemmas about UFSSs that are helpful showing that in a given structure every UFSS is a finite union of injective UFSSs.

\begin{lem}\label{lem:sub} Let $S \subseteq M^n$ be small and let $(Z_1,S,X), (Z_2,S,X)$ be $A$-definable UFSSs, where $Z_2 \subseteq M^{n+k+l}$. If
\begin{itemize}
\item $Z_{1,a} \subseteq Z_{2,a}$ for all $a\in M^k$,
\item $(Z_2,S,X)$ is a finite union of injective $A$-definable UFSSs,
\end{itemize}
then $(Z_1,S,X)$ is a finite union of injective $A$-definable UFSSs.
\end{lem}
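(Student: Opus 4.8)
The plan is to take the given injective cover of $(Z_2,S,X)$ and trim each of its pieces down to the fibers that actually occur in $Z_1$. The only consequence of the first hypothesis I will need is the resulting containment of total fibers: since $Z_{1,a}\subseteq Z_{2,a}$ for every $a$, we have $\bigcup_{b\in X_a}Z_{1,b,a}\subseteq\bigcup_{b\in X_a}Z_{2,b,a}$ for all $a\in M^k$.

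First I would fix injective $A$-definable UFSSs $(W_1,S_1,X_1),\dots,(W_p,S_p,X_p)$ witnessing the second hypothesis, so that
\[
\bigcup_{b\in X_a}Z_{2,b,a}=\bigcup_{j=1}^p\bigcup_{b\in X_{j,a}}W_{j,b,a}\qquad\text{for all }a\in M^k.
\]
Next I would isolate the region of $M^{k+l}$ swept out by the fibers of $Z_1$: the set
\[
T:=\{(a,y)\in M^{k+l}:\exists b\,\big((b,a)\in X\wedge(b,a,y)\in Z_1\big)\}
\]
is $A$-definable, and its fiber $T_a$ equals $\bigcup_{b\in X_a}Z_{1,b,a}$.

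Then I would trim each witness by intersecting its fibers with $T$, setting $W_j':=\{(b,a,y)\in W_j:(a,y)\in T\}$. Each $(W_j',S_j,X_j)$ is again an injective $A$-definable UFSS: condition (1) is untouched because $S_j$ and $X_j$ are unchanged; condition (2) holds since $W'_{j,b,a}\subseteq W_{j,b,a}$ is finite; and condition (3) is inherited because $W'_{j,b,a}\cap W'_{j,c,a}\subseteq W_{j,b,a}\cap W_{j,c,a}=\emptyset$ for distinct $b,c\in X_{j,a}$. The $A$-definability of $T$ keeps each $W_j'$ $A$-definable. Finally I would check that $(Z_1,S,X)$ is the union of the $(W_j',S_j,X_j)$: for each $a$,
\[
\bigcup_{j=1}^p\bigcup_{b\in X_{j,a}}W'_{j,b,a}=\Big(\bigcup_{j=1}^p\bigcup_{b\in X_{j,a}}W_{j,b,a}\Big)\cap T_a=\Big(\bigcup_{b\in X_a}Z_{2,b,a}\Big)\cap\Big(\bigcup_{b\in X_a}Z_{1,b,a}\Big)=\bigcup_{b\in X_a}Z_{1,b,a},
\]
the last equality being the total-fiber containment noted at the outset.

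I expect no serious obstacle: the argument is an elementary trimming of fibers. The only points demanding care are verifying that shrinking fibers preserves both the finiteness in condition (2) and the disjointness in condition (3), and confirming that the sweeping set $T$ is genuinely $A$-definable so that the trimmed families remain within the allowed definability class.
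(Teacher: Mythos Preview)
Your trimming argument has a genuine gap: you have overlooked the distinction between $\Cal L_A$-definability and $A$-definability in the definition of a UFSS. In a UFSS $(Z,S,X)$, the set $Z$ must be $\Cal L_A$-definable, i.e.\ definable in the o-minimal reduct $\Cal M$ with parameters from $A$; only $S$ and $X$ are allowed to be $A$-definable in the full language $\Cal L(P)$. Your set $T$ is built from $X$, which is typically only $\Cal L(P)$-definable, so $T$ and hence each $W_j'=\{(b,a,y)\in W_j:(a,y)\in T\}$ are in general not $\Cal L_A$-definable. Thus $(W_j',S_j,X_j)$ is not a UFSS in the required sense, and your conclusion does not follow.

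The paper's proof sidesteps this by leaving the $\Cal L_A$-definable sets $W_i$ untouched and instead shrinking the third component $X_i$ to a set $Y_i$ of parameters $(b,a)$ for which $W_{i,b,a}$ lands inside the $Z_1$-sweep; since the third component is only required to be $A$-definable, this is legitimate. To make that work, one first invokes Lemma~\ref{lem:choiceprop} to arrange $|W_{i,b,a}|=1$, so that each fiber is either entirely in the sweep or entirely outside it and can be kept or discarded whole by adjusting $X_i$ alone.
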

\begin{proof}
Suppose there are injective $A$-definable UFSSs $(W_1,S_1,X_1),\dots, (W_p,S_p,X_p)$ such that $(Z_2,S,X)$ is a union of these UFSSs.
By Lemma \ref{lem:choiceprop} we may assume that $|W_{i,b,a}|=1$ for $i=1,\dots, p$ and $(b,a)\in M^{m_i+k}$ where $W_i\subseteq M^{m_i+k+l}$. Now define
\[
Y_i := \{ (b,a) \in X_i \ : \ \exists c \in X_a \ W_{i,b,a} \subseteq Z_{1,c,a}\}.
\]
It is easy to check that each $(W_i,S_i,Y_i)$ is an injective  UFSS, because $(W_i,S_i,X_i)$ is. From our definition of $Y_i$ it follows easily that $(Z_1,S,X)$ is a union of the injective UFSSs $(W_1,S_1,Y_1),\dots, (W_p,S_p,Y_p)$.
\end{proof}

\begin{lem}\label{lem:l=1} If every UFSS, $(W,S,Y)$ with $W\subseteq M^{m+k+1}$ is a finite union of injective $A$-definable UFSSs, then $\widetilde{\Cal M}$ has the choice property.
\end{lem}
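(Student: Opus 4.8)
The plan is to pass to the UFSS reformulation. By Proposition \ref{inj_UFSS}, the choice property for $\WM$ is equivalent to the statement that every $A$-definable UFSS is a finite union of injective $A$-definable UFSSs, so it suffices to deduce this statement (for arbitrary target dimension $l$) from the hypothesis, which is precisely its restriction to $l=1$. I would prove this by induction on $l$, the base case $l=1$ being the hypothesis itself. Throughout, given an $A$-definable UFSS $(Z,S,X)$ with $Z\subseteq M^{m+k+l}$, I first reduce to the case $Z=\graph(h)$ for an $\Cal{L}_A$-definable continuous $h=(h',h_l)\colon\pi(Z)\to M^{l-1}\times M$: this is done exactly as in the proof of Proposition \ref{inj_UFSS}, using o-minimal cell decomposition and the finiteness of the fibers $Z_{b,a}$, and noting that a finite union of (finite unions of injective) UFSSs is again one. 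Write $V_a:=h(X_a,a)$ for the small subset of $M^l$ produced by the family.

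For the inductive step I would apply the induction hypothesis to the UFSS $(\graph(h'),S,X)$, whose target has dimension $l-1$, and then invoke Lemma \ref{lem:choiceprop} to replace the resulting injective UFSSs by $\Cal{L}_A$-definable continuous maps $g_i$ with $g_i(-,a)$ injective on index sets $X_{i,a}$ and $\bigcup_i g_i(X_{i,a},a)=h'(X_a,a)$. The organising identity is then
\[
V_a=\bigcup_i\{(g_i(c,a),v):c\in X_{i,a},\ v\in L_{i,c,a}\},\quad\text{where } L_{i,c,a}=\{h_l(b,a):b\in X_a,\ h'(b,a)=g_i(c,a)\},
\]
which I would verify by a direct double inclusion: every point of $V_a$ has its first $l-1$ coordinates in $h'(X_a,a)=\bigcup_i g_i(X_{i,a},a)$, and conversely every listed point is realised by some $b\in X_a$.

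The main obstacle is that the vertical fibers $L_{i,c,a}$ are small but in general \emph{infinite}, so one cannot directly select representatives as in the dense independent set case. The key idea is that, for each fixed $i$, the assignment $(c,a)\mapsto L_{i,c,a}$ is itself an $A$-definable UFSS with one-dimensional fibers: take index variable $b$, parameter $(c,a)$, ambient small set $S$, and let the graph record the value $h_l(b,a)$ over the index set $\{(b,c,a):b\in X_a,\ c\in X_{i,a},\ h'(b,a)=g_i(c,a)\}$. Applying the hypothesis ($l=1$, now with the enlarged parameter $(c,a)$) and Lemma \ref{lem:choiceprop}, I would write $L_{i,c,a}=\bigcup_t\eta_{i,t}(D_{i,t,(c,a)},c,a)$ with each $\eta_{i,t}(-,c,a)$ injective.

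Combining, the $\Cal{L}_A$-definable continuous maps $H_{i,t}(c,d,a):=(g_i(c,a),\eta_{i,t}(d,c,a))$ cover $V_a$ in finitely many pieces, and each $H_{i,t}(-,-,a)$ is injective on its index set by a lexicographic argument: equality of the first $l-1$ coordinates forces $c=c'$ by injectivity of $g_i(-,a)$, after which equality of the last coordinate forces $d=d'$ by injectivity of $\eta_{i,t}(-,c,a)$. Since the index set lies in a product of small sets (and finite products of small sets are small, which is routine), each piece is an injective $A$-definable UFSS, completing the induction and hence, via Proposition \ref{inj_UFSS}, the proof.
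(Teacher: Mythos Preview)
Your argument is correct, but the route differs from the paper's. The paper does not induct on $l$: it projects $Z$ onto each of the $l$ target coordinates separately to obtain $l$ UFSSs with one-dimensional fibers, applies the hypothesis to each, enlarges $Z$ to the \emph{box} $W=\{(b,a,x):\pi_i(b,a,x)\in Z'_i\text{ for all }i\}$ (using Lemma~\ref{lem:sub} to pass from $Z$ to $W$), and then writes $W$ as a union of injective UFSSs indexed by choice functions $\sigma\colon\{1,\dots,l\}\to\{1,\dots,p\}$, each built as the product of one injective piece from every coordinate. Your approach instead peels off the last coordinate, applies an inductive hypothesis to the first $l-1$ coordinates, and then treats the vertical fibers $L_{i,c,a}$ as an $l=1$ UFSS with the enlarged parameter $(c,a)$.

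One practical difference worth noting: the paper's proof keeps the parameter dimension $k$ fixed throughout, whereas your intermediate $l=1$ UFSS lives over parameters of dimension $m_i+k>k$. As a proof of Lemma~\ref{lem:l=1} (whose hypothesis quantifies over all $m,k$) this is harmless. But in Section~\ref{sec-rcf} the lemma is invoked inside an outer induction on $k$, where one only has the $l=1$ case for the \emph{current} $k$; there the paper's $k$-preserving reduction is essential, and your version would not slot in directly. So the paper's symmetric product construction buys compatibility with that induction, while your inductive argument is perhaps conceptually simpler but relies on the hypothesis at larger parameter dimension.
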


\begin{proof}
By Proposition \ref{inj_UFSS}, it suffices to show that every UFSS is a finite union of injective $A$-definable UFSSs.
So let $(Z,S,X)$ be a UFSS with $Z\subseteq M^{m+k+l}$ where $l>1$.  For $i=1,\dots,l$, let $Z_i'=\pi_i (Z)\subseteq M^{m+k+1}$ where $\pi_i$ is the projection onto the first $m+k$ coordinates and the $m+k+i$-th coordinate. It is clear that each $(Z_i',S,X)$ is an $A$-definable UFSS; hence by assumption, it is a finite union of injective $A$-definable UFSSs.

\medskip\noindent
We define
\[
W:= \{ (b,a,x) \in M^{m+k+l} \ : \ \pi_i(b,a,x) \in Z'_{i} \hbox{ for } i=1,\dots, l\}.
\]
We observe that for each $(b,a)\in \pi(Z)$, we have that $Z_{b,a}\subseteq W_{b,a}$ and $W_{b,a}$ is finite, since $Z_{b,a}$ is finite. Therefore $(W,S,X)$ is a UFSS. By Lemma \ref{lem:sub} it is left to show that $(W,S,X)$ is a finite union of injective $A$-definable UFSSs.\newline

\noindent Now for each $i=1,\dots,l$, let $(Z_{i,1},S_{i,1},X_{i,1}),\dots,(Z_{i,p},S_{i,p},X_{i,p})$ be injective $A$-definable UFSSs such that $(Z_i',S,X)$ is a union of these UFSSs. Without loss of generality we can assume that the same $p$ works for all $i$. For $\sigma : \{1,\dots,l\} \to \{1,\dots,p\}$ we define
\begin{align*}
Z_{\sigma} &:= \{ (b_1,\dots,b_l,a,z_1,\dots,z_l) \ : \ (b_i,a,z_i) \in Z_{i,\sigma(i)} \hbox{ for } i=1,\dots,l\}\\
S_{\sigma} &:= S_{1,\sigma(1)} \times \dots \times S_{l,\sigma(l)}\\
X_{\sigma} &:= \{ (b_1,\dots,b_l,a) \ : \ (b_i,a) \in X_{i,\sigma(i)} \hbox{ for } i=1,\dots,l\}.
\end{align*}
It is easy to check that each $(Z_{\sigma},S_{\sigma},X_{\sigma})$ is an injective $A$-definable UFSS and that for each $a \in M^k$
\[
\bigcup_{b\in X_a} W_{b,a} = \bigcup_{\sigma : \{1,\dots,l\} \to \{1,\dots,p\}} \bigcup_{c \in X_{\sigma,a}} Z_{\sigma,c,a}.
\]

%
%




\end{proof}

\begin{lem} Let $S\subseteq M^n$ be small and $A$-definable, and $Z\subseteq M^{n+1}$ be an $\Cal L_A$-definable cell such that $\dim Z_x=0$ for each $x\in \pi(Z)\subseteq M^n$. Then there is an $A$-definable small set $S'$ such that
\begin{itemize}
\item[(1)] $\bigcup_{g\in S} Z_g = \bigcup_{h\in S'} Z_h$.
\item[(2)] $Z_{h_1}\cap Z_{h_2} = \emptyset$ for $h_1,h_2\in S'$ with $h_1\neq h_2$.
\end{itemize}
\end{lem}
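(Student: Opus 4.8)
The plan is to identify $Z$ with a graph via cell decomposition, and then to reduce conditions (1) and (2) to the existence of a single $\Cal L_A$-definable section of that graph over the image of $S$.

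First I would apply cell decomposition in $\Cal M$: since $Z\subseteq M^{n+1}$ is an $\Cal L_A$-definable cell all of whose fibres $Z_x$ are $0$-dimensional, hence singletons as $Z$ is a cell, $Z$ is the graph of an $\Cal L_A$-definable continuous function $f:\pi(Z)\to M$. Thus $Z_g=\{f(g)\}$ for $g\in\pi(Z)$ and $Z_g=\emptyset$ otherwise, so with $V:=f(S\cap\pi(Z))$ we get $\bigcup_{g\in S}Z_g=V$. As $S$ is small and $f$ is $\Cal L_A$-definable, $V$ is a small $A$-definable subset of $M$. In these terms the goal becomes: find a small $A$-definable $S'\subseteq\pi(Z)$ with $f(S')=V$ and $f\res S'$ injective. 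Such an $S'$ yields $\bigcup_{h\in S'}Z_h=f(S')=V$, which is (1), while injectivity of $f$ on $S'$ makes the singletons $Z_{h_1}=\{f(h_1)\}$ and $Z_{h_2}=\{f(h_2)\}$ disjoint for distinct $h_1,h_2\in S'$, which is (2).

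The heart of the matter is to pick one preimage of each value in a definable way, i.e.\ to produce a section of $f$. For this I would use that $\Cal M$, \emph{unlike} the pair $\WM$, has definable Skolem functions: o-minimality forces the underlying ordered group to be divisible, so midpoints are $\Cal L$-definable and the standard construction applies (e.g.\ \cite{vdd-book}). Hence the $\Cal L_A$-definable relation $\{(v,g):g\in\pi(Z),\ f(g)=v\}$, whose fibre over $v$ is the nonempty set $f^{-1}(v)\cap\pi(Z)$ for each $v\in f(\pi(Z))$, admits an $\Cal L_A$-definable section $s:f(\pi(Z))\to\pi(Z)$ with $f\circ s=\mathrm{id}$. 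I would then set $S':=s(V)$.

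Finally I would verify the three properties. Since $V$ is small and $s$ is $\Cal L_A$-definable, $S'=s(V)$ is a small $A$-definable subset of $\pi(Z)$; here I am using that smallness is preserved under $\Cal L_A$-definable images, which is the one point requiring a little care. Condition (1) is $f(S')=f(s(V))=V=\bigcup_{g\in S}Z_g$. For (2), a section $s$ is injective, so distinct $h_1,h_2\in S'$ equal $s(v_1),s(v_2)$ with $v_1\neq v_2$, whence $Z_{h_i}=\{v_i\}$ are disjoint. The only genuine obstacle is the reduction to definable choice in the o-minimal reduct; once one recognises that $\Cal M$ itself enjoys definable Skolem functions (whereas $\WM$ does not, by Fact \ref{noSkolemfunction}), everything else is routine bookkeeping.
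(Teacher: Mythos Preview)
Your proof is correct and follows essentially the same approach as the paper: both recognise that $|Z_x|=1$ for $x\in\pi(Z)$ and then invoke definable choice in the o-minimal reduct $\Cal M$ to select one representative per fibre, setting $S'$ to be the image of $S$ under the resulting $\Cal L_A$-definable selector. The paper phrases this slightly more abstractly (a map $M^n\to M^n$ picking canonical representatives of the equivalence relation $Z_x=Z_y$) while you pass explicitly through the graph description $Z=\graph(f)$ and a section $s$ of $f$, but the content is identical.
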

\begin{proof}
Since $Z$ is a cell and $\dim Z_x =0$ for each $x\in \pi(Z)$, we have that $|Z_x|=1$ for each $x \in \pi(Z)$. By definable choice in o-minimal structures, there is an $\Cal L_A$-definable function $f: M^n \to M^n$ such that for each $x,y\in M^n$
\begin{itemize}
\item $Z_{f(x)} = Z_x$, and
\item $Z_{f(x)} \cap Z_{f(y)}=\emptyset$ whenever $f(x)\neq f(y)$.
\end{itemize}
Note that $f(S)$ is small and $A$-definable. Therefore the conclusion holds with $S':=f(S)$.
\end{proof}

\begin{cor}\label{cor:cpcase0} Let $(Z,S,X)$ be an $A$-definable UFSS such that $Z\subseteq M^{m+k+l}$. If $k=0$, then $(Z,S,X)$ is a finite union of injective $A$-definable UFSSs.
\end{cor}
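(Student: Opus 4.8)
The plan is to reduce to $l=1$ and then disjointify cell by cell using the lemma immediately preceding the corollary. First I would invoke the reduction carried out inside the proof of Lemma~\ref{lem:l=1}: that argument replaces a UFSS with $Z\subseteq M^{m+k+l}$ and $l>1$ by the UFSSs $(Z_i',S,X)$, where $Z_i'$ is the projection of $Z$ onto its first $m+k$ coordinates together with the $(m+k+i)$-th, and then recombines them through the maps $\sigma:\{1,\dots,l\}\to\{1,\dots,p\}$ and Lemma~\ref{lem:sub}. Every UFSS occurring in that reduction has the same number $k$ of parameters, so, specialized to $k=0$, it shows that it suffices to prove the corollary when $l=1$. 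From here on I take $Z\subseteq M^{m+1}$ and drop the trivial $M^0$ coordinate, so that $X\subseteq S\subseteq M^m$, each fibre $Z_b$ is finite, and the goal is to present $\bigcup_{b\in X}Z_b$ in the required form.

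Next I would apply cell decomposition in $\Cal M$ to the $\Cal L_A$-definable set $Z\subseteq M^{m+1}$, writing $Z=\bigcup_{j=1}^r C_j$ for $\Cal L_A$-definable cells $C_j$. Since every $Z_b$ is finite, each $(C_j)_b\subseteq Z_b$ is finite, so $\dim (C_j)_x=0$ for all $x\in\pi(C_j)$; as $C_j$ is a cell this forces $C_j$ to be the graph of a continuous $\Cal L_A$-definable function over the cell $\pi(C_j)\subseteq M^m$. Putting $X_j:=X\cap\pi(C_j)$, which is $A$-definable and, being a subset of $S$, small, I would feed $(C_j,X_j)$ into the preceding lemma. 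It yields an $A$-definable small set $S_j'$ with $\bigcup_{g\in X_j}(C_j)_g=\bigcup_{h\in S_j'}(C_j)_h$ and with $(C_j)_{h_1}\cap (C_j)_{h_2}=\emptyset$ for distinct $h_1,h_2\in S_j'$.

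Finally I would assemble the triples $(C_j,S_j',S_j')$ for $j=1,\dots,r$. Each has singleton fibres, satisfies condition~(1) of a UFSS trivially (its index set coincides with its small set $S_j'$), and is injective by the disjointness clause above, so each is an injective $A$-definable UFSS; moreover $Z=\bigcup_jC_j$ gives $\bigcup_{b\in X}Z_b=\bigcup_j\bigcup_{b\in X_j}(C_j)_b=\bigcup_j\bigcup_{h\in S_j'}(C_j)_h$, so $(Z,S,X)$ is the union of these UFSSs (here ``union'' need not be disjoint, so overlaps among the $C_j$ are harmless). All of the genuine content sits in the preceding lemma, whose disjointification rests on definable choice in the o-minimal structure; for the corollary itself the only points demanding care are purely organisational — verifying that the $l=1$ reduction of Lemma~\ref{lem:l=1} leaves $k=0$ untouched, and that finiteness of the fibres really forces each cell to be a graph — and I anticipate no obstacle beyond these checks.
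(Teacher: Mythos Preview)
Your proposal is correct and matches the paper's intended argument: the corollary is stated without proof immediately after the disjointification lemma precisely because it follows by cell-decomposing $Z$, applying that lemma cell by cell, and (since the lemma is stated only for $l=1$) invoking the reduction inside Lemma~\ref{lem:l=1}, which indeed leaves $k$ unchanged. The organisational checks you flag are the only points requiring care, and you have handled them.
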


\noindent We collect two more lemmas whose very easy, but technical proofs we leave for the reader.

\begin{lem}\label{lem:indstep1} Let $(Z,S,X)$ be an $A$-definable UFSS such that $Z\subseteq M^{m+k+l}$, and let  $f : M^{m} \to M^{n}$ be $\Cal L_A$-definable. If $(Z,S,X)$ is a finite union of $A$-definable injective UFSSs, then so is $(Z',S',X')$ where
\begin{align*}
Z' &:= \{ (b,f(b),a,c) \in M^{m+n+k+l} : \ (b,a,c) \in Z\},\\
S' &:= \{(b,f(b)) : b\in S \},\\
X' &:= \{ (b,f(b),a) \in M^{m+n+k} \ : \ (b,a) \in X\}.
\end{align*}
\end{lem}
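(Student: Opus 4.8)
The plan is to show that the very same injective UFSSs that witness the decomposition of $(Z,S,X)$ already witness the decomposition of $(Z',S',X')$. The key observation is that passing from $(Z,S,X)$ to $(Z',S',X')$ only reparametrizes the first block of coordinates through the $\Cal L_A$-definable graph map $\gamma : b\mapsto (b,f(b))$, and this map is a bijection of $S$ onto $S'$ and of $X_a$ onto $X'_a$ that leaves every relevant fiber, and hence every fiber-union, literally unchanged. So nothing about the combinatorial content of the decomposition is disturbed.

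Concretely, I would first record that $(Z',S',X')$ is itself an $A$-definable UFSS. Since $Z$ and $f$ are $\Cal L_A$-definable, both $Z'$ and $X'$ are $A$-definable, and $S'=\gamma(S)$ is $A$-definable; moreover $S'$ is small, being the image of the small set $S$ under the $\Cal L_A$-definable map $\gamma$. For the defining conditions of a UFSS: if $(b,a)\in X$ then $b\in S$, so $(b,f(b))\in S'$, which gives $X'_a\subseteq S'$; and for $b'\in M^{m+n}$ the fiber $Z'_{b',a}$ is nonempty only when $b'=(b,f(b))$ for some $b$, in which case $Z'_{(b,f(b)),a}=Z_{b,a}$, which is finite because $(Z,S,X)$ is a UFSS.

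The heart of the argument is then the fiber-union identity. Suppose $(Z,S,X)$ is a union of the injective $A$-definable UFSSs $(W_1,S_1,X_1),\dots,(W_p,S_p,X_p)$. For each $a\in M^k$ the map $\gamma$ restricts to a bijection $X_a\to X'_a$, $b\mapsto(b,f(b))$, and since $Z'_{(b,f(b)),a}=Z_{b,a}$ we obtain
\[
\bigcup_{b'\in X'_a}Z'_{b',a}=\bigcup_{b\in X_a}Z_{b,a}=\bigcup_{i=1}^{p}\bigcup_{b\in X_{i,a}}W_{i,b,a}.
\]
Because the defining relation for a union of UFSSs refers only to these fiber-unions, this shows that $(Z',S',X')$ is a union of the same injective UFSSs $(W_1,S_1,X_1),\dots,(W_p,S_p,X_p)$, which is exactly what is required.

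I do not expect a serious obstacle; the content is entirely bookkeeping. The two points that deserve a moment's care are verifying that $S'$ is small, which rests on the standard fact that the $\Cal L_A$-definable image of a small set is small, and keeping the index blocks aligned so that the identification $Z'_{(b,f(b)),a}=Z_{b,a}$ is transparent and the fiber-unions are seen to coincide.
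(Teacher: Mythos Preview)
Your argument is correct. The paper does not supply a proof of this lemma at all; the authors explicitly write that its ``very easy, but technical'' proof is left to the reader, and what you have written is exactly the intended routine verification: the reparametrization $b\mapsto(b,f(b))$ preserves the fiber-unions $\bigcup_{b\in X_a}Z_{b,a}$, so the same injective $A$-definable UFSSs that decompose $(Z,S,X)$ already decompose $(Z',S',X')$.
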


\begin{lem}\label{lem:indstep2} Let $(Z,S,X)$ be an $A$-definable UFSS such that $Z\subseteq M^{m+k+l}$, and let $f : M^{m+k} \to M^{n}$ be $\Cal L_A$-definable. If $(Z,S,X)$ is a finite union of injective $A$-definable UFSSs, then so is $(Z',S,X')$ where
\begin{align*}
Z' := \{ (b,a,f(b,a),c) \in M^{m+k+n+l} : \ (b,a,c) \in Z\}\\
X' := \{ (b,a,d) \in M^{m+k+n} \ : \ (b,a) \in X,\, d=f(a,b)\}.
\end{align*}
\end{lem}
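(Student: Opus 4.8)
The plan is to build the required injective pieces for $(Z',S,X')$ directly out of any given decomposition of $(Z,S,X)$, by carrying the new coordinate $d$ along as a dummy parameter and then thinning the index sets, in the same spirit as the proof of Lemma \ref{lem:sub}. Before anything, I would record that $(Z',S,X')$ is genuinely a UFSS (with small‑set part $S\subseteq M^m$ and parameter space $M^{k+n}$): writing $f(b,a)$ for the value of $f$ on $(b,a)$, one has $X'_{(a,d)}=\{b\in X_a : f(b,a)=d\}\subseteq X_a\subseteq S$, while $Z'_{(b,a,d)}$ equals the finite set $Z_{b,a}$ when $d=f(b,a)$ and is empty otherwise. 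The key bookkeeping observation is that for $b\in X'_{(a,d)}$ we have $Z'_{(b,a,d)}=Z_{b,a}$, so that
\[
\bigcup_{b\in X'_{(a,d)}}Z'_{(b,a,d)}=\bigcup_{b\in X_a,\ f(b,a)=d}Z_{b,a},
\]
and it is this set, for each $(a,d)$, that I must realize as a finite union of injective UFSSs.

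Next I would fix injective $A$-definable UFSSs $(W_1,S_1,X_1),\dots,(W_p,S_p,X_p)$ with $W_i\subseteq M^{m_i+k+l}$ whose union is $(Z,S,X)$, and, invoking Lemma \ref{lem:choiceprop}, arrange that $|W_{i,(b',a)}|=1$ for all $i$ and all $(b',a)$. For each $i$ I set $V_i:=\{(b',a,d,c) : (b',a,c)\in W_i\}\subseteq M^{m_i+(k+n)+l}$, so that $d$ is a true dummy and $V_{i,(b',a,d)}=W_{i,(b',a)}$, and I thin the index set to
\[
Y_i := \{(b',a,d)\in M^{m_i+k+n} : (b',a)\in X_i \text{ and } W_{i,(b',a)}\subseteq Z_{b,a}\text{ for some } b\in X'_{(a,d)}\}.
\]
Each $(V_i,S_i,Y_i)$ is then an injective $A$-definable UFSS: it is $A$-definable because $X'$ is $A$-definable and $Z$ is $\Cal L_A$-definable; conditions (1) and (2) are inherited since $Y_{i,(a,d)}\subseteq X_{i,a}\subseteq S_i$ and the fibers of $V_i$ are those of $W_i$; and injectivity holds because $V_{i,(b',a,d)}\cap V_{i,(c',a,d)}=W_{i,(b',a)}\cap W_{i,(c',a)}=\emptyset$ for distinct $b',c'\in Y_{i,(a,d)}\subseteq X_{i,a}$.

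It then remains to verify, for every $(a,d)$, the union identity
\[
\bigcup_{b\in X'_{(a,d)}}Z'_{(b,a,d)}=\bigcup_{i}\bigcup_{b'\in Y_{i,(a,d)}}V_{i,(b',a,d)}.
\]
The inclusion $\supseteq$ is immediate from the defining condition on $Y_i$. For $\subseteq$, given $y\in Z_{b,a}$ with $b\in X'_{(a,d)}\subseteq X_a$, the union hypothesis for $(Z,S,X)$ produces an $i$ and a $b'\in X_{i,a}$ with $y\in W_{i,(b',a)}$; since that fiber is the singleton $\{y\}$ and $y\in Z_{b,a}$ with $b\in X'_{(a,d)}$, this same $b$ witnesses $b'\in Y_{i,(a,d)}$, so $y$ lies in the right‑hand side. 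This exhibits $(Z',S,X')$ as a finite union of the injective UFSSs $(V_i,S_i,Y_i)$.

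The one genuinely delicate point is the last verification, and the obstacle it conceals is worth isolating: the indices $b'$ occurring in the pieces $W_i$ live in the auxiliary spaces $M^{m_i}$ and a priori bear no relation to the original index $b\in M^m$, which is the only place where $f$—and hence the constraint $d=f(b,a)$ cutting out $X'$—is defined. Passing to singleton fibers via Lemma \ref{lem:choiceprop} is precisely what bridges this gap: a one‑point fiber $W_{i,(b',a)}=\{y\}$ can be matched to an honest original index $b\in X'_{(a,d)}$ by the single membership test $y\in Z_{b,a}$, which is exactly the condition encoded in $Y_i$. Everything else is the routine bookkeeping already rehearsed in Lemma \ref{lem:sub}.
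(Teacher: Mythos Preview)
Your proof is correct, and the paper itself omits any argument: the authors simply write ``We collect two more lemmas whose very easy, but technical proofs we leave for the reader.'' Your strategy of adjoining $d$ as a dummy coordinate to the $W_i$ and then thinning the index sets exactly as in Lemma~\ref{lem:sub}, after first passing to singleton fibers via Lemma~\ref{lem:choiceprop}, is the natural approach and almost certainly what the authors intended; you have also correctly isolated the one point that requires care, namely that the auxiliary indices $b'\in M^{m_i}$ carry no intrinsic relation to $f$, so the thinning condition on $Y_i$ must be phrased via the existence of a genuine $b\in X'_{(a,d)}$ with $W_{i,(b',a)}\subseteq Z_{b,a}$.
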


\noindent
Note that in this lemma, $m$ and $l$ are preserved and $k$ is replaced with $k+n$.

\section{Expansions of real closed fields}\label{sec-rcf}

Let $\Cal M$ be a real closed field. Let $P$ a subset of $M$. In this section, we will show that $\widetilde{\Cal M}=\la\Cal M,P\ra$ has the choice property. We start by fixing some notation we will use in the proof.\newline

\noindent Define the following  order on $\N^k \times \N$: $(i_1,\dots,i_k,r) \prec (j_1,\dots,j_k,s)$ if and only if one of the following two conditions holds:
\begin{itemize}
\item $i_1+\dots + i_k + r < j_1+\dots+j_k+s$,
\item $i_1+\dots + i_k + r = j_1+\dots+j_k+s$ and $(i_1,\dots,i_k,r) <_{lex} (j_1,\dots,j_k,s)$.
\end{itemize}
Observe that $(\N^k\times \N,\prec)$ has order type $\omega$. We denote the order isomorphism between $\N$ and $\N^k \times \N$ by $\sigma$.\newline

\noindent Let $K$ be a field and consider the polynomial ring $K[X_1,\dots,X_k,Y]$ in $n+1$ variables. For $p \in K[X_1,\dots,X_k,Y]$, the $\order(p)$ is the $\prec$-maximal element $(i_1,\dots,i_k,r)$ in $\N^k \times \N$ such that the monomial
\[
X_1^{i_1}\cdots X_k^{i_n} Y^s
\]
appears with a non-zero coefficient in the polynomial $p$.

\medskip\noindent
Now we are ready to prove the first part of Theorem \ref{main}.

\begin{theorem}
$\WM$ has the choice property.
\end{theorem}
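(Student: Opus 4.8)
The plan is to reduce to the one-dimensional case and then use the polynomial structure available in a real closed field to set up an inductive argument on the ordinal $\order$ of the defining polynomials. By Lemma~\ref{lem:l=1} and Proposition~\ref{inj_UFSS}, it suffices to show that every $A$-definable UFSS $(Z,S,X)$ with $Z\subseteq M^{m+k+1}$ is a finite union of injective $A$-definable UFSSs. By Corollary~\ref{cor:cpcase0} the case $k=0$ is already settled, so the real content is to run an induction on $k$ (and, within each $k$, on a suitable complexity measure of the family).

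First I would exploit cell decomposition and o-minimality to replace the finite fibers $Z_{b,a}$ by graphs of continuous definable functions, as in Lemma~\ref{lem:choiceprop}, so that we may assume $|Z_{b,a}|=1$ and $Z=\graph(h)$ for some $\Cal L_A$-definable $h:\pi(Z)\to M$. Since $\Cal M$ is a real closed field, every such $\Cal L_A$-definable function is semialgebraic, so the graph of $h$ over each cell is carved out by polynomial (in)equalities in $X_1,\dots,X_k,Y$ over the parameters. The key idea is to attach to the family the polynomial $p$ with smallest $\order(p)$ that witnesses the dependence of the last coordinate on $(b,a)$, and to use the $\prec$-ordering of $\N^k\times\N$ (which has order type $\omega$) to drive a descent: either $p$ already forces $h(b,-)$ to factor through fewer of the $a$-variables—so that Lemmas~\ref{lem:indstep1}, \ref{lem:indstep2}, and~\ref{lem:sub} let me peel off a variable and invoke the inductive hypothesis for smaller $k$—or differentiating/specializing $p$ produces a family governed by a polynomial of strictly smaller $\order$, on which I apply the inner induction.

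The bookkeeping step is to show that once the family is governed by a single polynomial relation, the points $b\in X_a$ with a common value of $h(b,a)$ can be separated definably; here the crucial input is Fact~\ref{noSkolemfunction} (failure of definable Skolem functions into $P$) together with $X_a\subseteq S\subseteq P^n$, which is what prevents $h(-,a)$ from collapsing a small set onto an interval and thereby guarantees the relevant fibers are finite. Concretely, I expect to factor $h$ through a polynomial map whose coordinate functions have controlled $\order$, split $X$ according to which branch of the algebraic relation a point lies on, and on each branch obtain an injective subfamily; Lemma~\ref{lem:sub} then reassembles the pieces.

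The main obstacle, I expect, is organizing the double induction so that the descent in $\order(p)$ genuinely decreases complexity while keeping the ambient dimension and the smallness of the sets $Y_i$ under control. The delicate point is that reducing $\order$ via differentiation or resultants introduces new auxiliary coordinates (handled formally by Lemmas~\ref{lem:indstep1} and~\ref{lem:indstep2}, which track exactly how $m,k,l$ change), and one must verify that the new families remain UFSSs—i.e.\ that the relevant fibers stay finite and that $X_a$ stays inside the small set $S$. Ensuring that these reductions terminate, rather than cycling, is precisely why the order type $\omega$ of $(\N^k\times\N,\prec)$ is built into the setup, and checking that each reduction strictly lowers the $\prec$-position of the governing polynomial will be the heart of the argument.
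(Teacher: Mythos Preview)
Your overall architecture matches the paper: reduce to $l=1$ via Lemma~\ref{lem:l=1}, induct on $k$ with base case Corollary~\ref{cor:cpcase0}, and within each $k$ run an inner induction along $(\N^k\times\N,\prec)$ on the $\order$ of a governing polynomial. That much is right.

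The gap is in the inner induction step. You say the descent is achieved by ``differentiating/specializing $p$'' or by ``resultants'', but neither of these is the mechanism, and you have not explained how either would actually compare two \emph{different} parameters $b,b'\in X_a$ that yield the same value $h(b,a)=h(b',a)$. The paper's key move is quite specific: first replace the parameter $b\in M^n$ by the vector $d=h(b)$ of \emph{coefficients} of $p(b,-,-)$ (an $\Cal L_A$-definable map into $M^{|I|}$), so that the family is now governed by $q_d(y,z)=\sum d_{i,j}y^iz^j$; then normalize so the $\prec$-leading coefficient is identically $1$ on the relevant piece. With this normalization, if $d_1\neq d_2$ in $S_0=h(S)$ produce a common root, the difference $r_{d_1,d_2}=q_{d_1}-q_{d_2}$ is nonzero and has $\order(r_{d_1,d_2})\prec\alpha$ because the top monomials cancel. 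This is what drives the $\prec$-descent for the ``collision'' part; the remaining piece, where $r_{d_1,d_2}(a,-)\equiv 0$, lives over a set of strictly smaller dimension in the $a$-variables and is handled by the outer induction on $k$ via Lemmas~\ref{lem:indstep1} and~\ref{lem:indstep2}. None of this is visible in your sketch, and differentiation of $p$ in $z$ does not by itself separate distinct $b$'s.

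A second, smaller issue: your appeal to Fact~\ref{noSkolemfunction} is a red herring. Finiteness of the fibers $Z_{b,a}$ is part of the UFSS hypothesis (and, after the reduction, comes from $q_d(a,-)\neq 0$), not from any failure of Skolem functions into $P$; the proof never uses Fact~\ref{noSkolemfunction}.
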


\begin{proof} Let $(Z,S,X)$ be an $A$-definable UFSS where $Z\subseteq M^{n+k+l}$. By Proposition \ref{inj_UFSS}, it suffices to show that $(Z,S,X)$ is finite union of injective $A$-definable UFSSs. We proceed by induction on $k$.
The case $k=0$ is just Corollary \ref{cor:cpcase0}.
\newline

So now suppose that $k>0$ and the statement holds $k'<k$. By quantifier elimination for real closed fields, we can assume that $Z$ is a finite union of sets of the form
\begin{equation}\label{eq:proof}
\tag{$\ast$}\big\{ (b,a,c) \in M^{n+k+l} \ : \ p(b,a,c) = 0, q_1(b,a,c)>0,\dots, q_s(b,a,c)>0, (a,b) \in U\big\}.
\end{equation}
where $p,q_1,\dots,q_s$ are polynomials in $\Q(A)(x_1,\dots,x_n)[y,z]$ and $U$ is some $\Cal L_A$-definable set. We can directly reduce to the case that $Z$ is of the form \eqref{eq:proof}.
By Lemma \ref{lem:sub}, we can reduce to the case that $q_1=\dots=q_s=1$. By Lemma \ref{lem:l=1}, we can assume that $l=1$.\newline

\noindent We now show the following statement:

\noindent \begin{itemize}
\item Let $\alpha\in \N^k \times \N$. If $(Z,S,X)$ is an $A$-definable UFSS such that there is $p \in \Q(A)(x)[y,z]$, and $\Cal L_A$-definable set $U$ such that
\begin{enumerate}
\item $Z =\{ (b,a,c) \in M^{n+k+1} \ : \ p(b,a,c) = 0, (b,a) \in U\}$
\item $\order(p(b,-,-))\preceq \alpha$ for every $b\in M^k$,
\end{enumerate}
then $(Z,S,X)$ is a finite union of injective $A$-definable UFSSs.
\end{itemize}

\noindent We prove this statement by induction on $\alpha$ with respect to the well-order $\prec$. So let $\alpha \in \N^k\times \N$, and let $(Z,S,X)$ be an $A$-definable UFSS, $p(x,y,z)\in \Q(A)(x)[y,z]$ and $U\subseteq M^{n+k}$ $\Cal L_A$-definable such that (1) and (2) hold.  \newline

\noindent Let $f_{i,j} : M^n \to M$ be rational functions over the field $\Q(A)$ such that
\[
p(x,y,z) = \sum_{(i,j) \in \N^{k+1}} f_{i,j}(x) y^i z^j.
\]
Let $I$ be the finite set of all $(i,j)\in \N^{k+1}$ such that $f_{i,j} \neq 0$.   For each $(i,j) \in I$ define
\[
W_{i,j} := \{ b \in M^n \ : \ \order\big(p(b,-,-)\big) = (i,j)\}
\]
Observe that $W_{i,j}$ is $\Cal L_A$-definable. We can directly reduce to the case that $Z \subseteq W_{v,w}\times M^{k+1}$ for some $(v,w)\in I$. By replacing some of the $f_{i,j}$'s by $0$, we can further assume that $(v,w)$ is the $\prec$-maximum of $I$. By dividing $p$ by $f_{v,w}(x)$, we can assume that $f_{v,w}(x)=1$ for every $x \in W_{v,w}$.

\medskip\noindent
Let $n_1,\dots,n_{|I|}\in \N$ such that $\sigma^{-1}(I)=\{n_1,\dots,n_{|I|}\}$\footnote{Recall that $\sigma$ is the order isomorphism between $(\N,<)$ and $(\N^k \times \N,\prec)$.}. Define $h : M^n \to M^{|I|}$ to be the function given by
\[
x \mapsto (f_{\sigma(n_1)}(x),\dots,f_{\sigma(n_{|I|})}(x)).
\]
For $d=(d_{n_1},\dots,d_{n_{|I|}})\in M^{|I|}$, let $q_d$ denote the polynomial
\[
q_d(y,z) := \sum_{(i,j)\in I} d_{\sigma^{-1}((i,j))} y^i z^j.
\]
Set $S_0 := h(S)$. Observe that $S_0$ is small, since $S$ is. Since $f_{\sigma(n_{|I|})}(b) = 1$ for all $b \in S$, we get $\order(q_{h(b)}) = \order(p(b,-,-))\preceq \alpha$.
Define
\begin{align*}
Z_0 &:= \{ (d,a,c) \in M^{|I|+k+1} \ : \ q_d(a,c) = 0, \ q_d(a,-)\neq 0\}\\
X_0 &:= \{ (h(b),a) \ : \ b \in X_a, a\in M^k \}.
\end{align*}
Observe that for each $a \in M^k$ we have
\[
\bigcup_{d \in X_{0,a}} Z_{0,a,d} \supseteq \bigcup_{b \in X_a} Z_{a,b}.
\]
By Lemma \ref{lem:sub} it is enough to check that $(Z_0,S_0,X_0)$ is a finite union of injective $A$-definable UFSSs.\newline


\noindent



\noindent
Let $W$ be the $\Cal L_A$-definable set
\[
\{ (d_1,d_2,a,c) \in M^{2|I|+k+1} \ : \ d_1 \neq d_2 \wedge (d_1,a,c)\in Z_0 \wedge (d_2,a,c) \in Z_0\}.
\]
Define $X_1 \subseteq M^{|I|+k+1}$ by
\[
\{ (d,a)\in S_0 \times M^k \ : \ \forall d' \in S_0 \ (\exists c \in M \ (d,a,c)\in Z_0 \wedge (d',a,c) \in Z_0) \rightarrow (d=d')\}.
\]
Observe that for all $a\in M^k$
\[
\bigcup_{d \in X_{0,a}} Z_{0,d,a} = \bigcup_{d\in X_{1,a}} Z_{0,d,a} \cup \bigcup_{(d_1,d_2) \in X_{0,a}^2} W_{d_1,d_2,a,c}.
\]
Therefore it is enough to show that both $(Z_0,S_0,X_1)$ and $(W,S_0^2,X_0^2)$ are finite unions of injective $A$-definable UFSSs. It follows directly from the definition of $X_1$ that $(Z_0,S_0,X_1)$ is an injective UFSS. It is only left to consider $(W,S_0^2,X_0^2)$.\newline

\noindent We now show that $(W,S_0^2,X_0^2)$ is a finite union of injective $A$-definable UFSSs. For $(d_1,d_2) \in M^{2|I|}$, let $r_{d_1,d_2}(y,z)$ be the polynomial $q_{d_1}(y,z)-q_{d_2}(y,z)$.
Because $d_{1,|I|}=d_{2,|I|}=1$ for every $d_1,d_2 \in S_0$, we have that $\order(r_{d_1,d_2})\prec \order(q_{d_1})\preceq \alpha$ for every $d_1,d_2 \in S_0$.\newline

\noindent We split up $W$ into
\begin{align*}
V_1 &:= \{ (d_1,d_2,a,c) \in W \ : \ r_{d_1,d_2}(a,-) \neq 0\}\\
V_2 &:= W \setminus V_1.
\end{align*}
It is left to show that both $(V_1,S_0^2,X_0^2)$ and $(V_2,S_0^2,X_0^2)$ are finite unions of injective $A$-definable UFSSs.\newline

\noindent We first prove this for $(V_1,S_0^2,X_0^2)$. For the following let $\pi : M^{2|I|+k+1} \to M^{2|I|+k}$ be the coordinate projection onto the first $2|I|+k$ coordinates. Let
\[
U_1 :=\{ (d_1,d_2,a,c) \in M^{2|I|+k+1} \ : \ r_{d_1,d_2}(a,c)=0, (d_1,d_2,a) \in \pi(V_1) \}.
\]
Observe that $U_1 \supseteq V_1$. By Lemma \ref{lem:sub} it is enough to show that $(U_1,S_0^2,X_0^2)$ is a finite union of injective UFSSs. Since $r_{d_1,d_2}(a,-)\neq 0$, we have that $U_{1,d_1,d_2,a}$ is finite for each $(d_1,d_2,a) \in \pi(U)$. Since $\order(r_{d_1,d_2})\prec \alpha$, $(U_1,S_0^2,X_0^2)$ is a finite union of injective $A$-definable UFSSs by the induction hypothesis.\newline

\noindent We now consider $(V_2,S_0^2,X_0^2)$. Let
\[
U_2 : =\{ (d_1,d_2,a,c) \in M^{2|I|+k+1} \ : \ r_{d_1,d_2}(a,-)=0\}.
\]
Observe that $r_{d_1,d_2}(-,-)\neq 0$ whenever $d_1\neq d_2$. Therefore $\dim \pi(U_2)<2|I|+k$. It follows easily from Lemma \ref{lem:indstep1} and Lemma \ref{lem:indstep2} and our induction hypothesis on $k$ that $(V_2,S_0^2,X_0^2)$ is a finite union of injective UFSSs.
\end{proof}

\end{document}